\newcommand*{\gl}{\nobreak\hskip\z@skip}
\DeclareRobustCommand*{\hyph}{\gl\hbox{-}\gl}
\DeclareRobustCommand*{\nhyph}{\gl\hbox{--}\gl}
\DeclareRobustCommand*{\mhyph}{\gl\hbox{---}\gl}
\newtheorem*{theorem}{Theorem}
\begin{document}
\title{Hilbert's Error?}
\author{Alexander Shen\thanks{LIRMM, CNRS \& University of Montpellier. On leave from IITP RAS (Moscow). E-mail: \url{alexander.shen@lirmm.fr, sasha.shen@gmail.com}}}
\date{}
\maketitle

\section{Hilbert's proof}

Geometric constructions have been a topic of interest of mathematicians for centuries. The Euclidean tradition uses two instruments: a straightedge (ruler) and a compass. However, limited constructions were also studied. The \emph{Mohr\nhyph Mascheroni} theorem says that every construction using both instruments can be replaced by a construction of the same object that uses only compass. (Obviously, a line cannot be drawn with a compass; we construct two different points on that line instead.) Another result, the \emph{Poncelet\nhyph Steiner} theorem, says that one application of a compass is enough: if a circle with its center is given, then every straightedge\nhyph compass construction can be performed only with a straightedge. Nowadays these results are considered more as recreational mathematics, and can be found in many books, for example, in~\cite{courant-robbins}.

Is it possible to improve the Poncelet\nhyph Steiner result and use only the circle \emph{without its center}? The same book~\cite[p.~152]{courant-robbins} provides a negative answer with a very nice argument:
\begin{quote}
\ldots if a circle, but not its center, is given, it is impossible to construct the latter by the use of the straightedge alone. To prove this we shall make use of a fact that will be discussed later: [\ldots] There exists a transformation of the plane into itself which has the following properties: (a) the given circle is fixed under the transformation, (b) Any straight line is carried into a straight line, (c) The center of the circle is carried into some other point. The mere existence of such a transformation shows the impossibility of constructing with the straightedge alone the center of the given circle. For, whatever the construction might be, it would consist in drawing a certain number of straight lines and finding their intersections with one another and with the given circle. Now if the whole figure, consisting of the given circle together with all points and lines of the construction, is subjected to the transformation whose existence we have assumed, the transformed figure will satisfy all the requirements of the construction, but will yield as result a point other than the center of the given circle. Hence such a construction is impossible.
\end{quote}
The transformation mentioned is easy to construct. Ellipse is a conic section, i.e., an image of a circle by a central projection. This projection maps the centre of the circle into a non-central point of the ellipse, so if we then apply an affine transformation to bring the ellipse back to the circle, the center is carried to some other point.

This argument goes back to great David Hilbert. It was published by Detlef Cauer~\cite{cauer} in a stronger version, saying  that for any two non-intersecting and non-concentric circles there is no way to construct their centers using only the straightedge. (In this case there is a projective transformation that preserves both circles, but not the centers, too, though it is more difficult to establish its existence.) For the case of one circle this argument, according to Cauer, was given by Hilbert in his lectures:
\begin{quote}
\foreignlanguage{german}{Nachdem ich die Ableitung eines Hilfssatzes aus der projectiven Geometrie vorausgeschickt habe (\S 1), f\"uhre ich in \S 2, auf Grund des von Herrn Hilbert f\"ur \emph{einen} Kreis angegebenen Gedankensganges, den Unm\"oglichkeitbeweis\gl$^{**}$ f\"ur zwei Kreise mit imagin\"aren Schnittpunkten im Endlichen}.\footnote{After first deriving an auxiliary proposition from projective geometry (\S 1), I provide in \S 2 the impossibility proof for the case of two circles with finite imaginary intersection points based of the approach used by Mr. Hilbert for \emph{one} circle.} 
\end{quote}
At that time Hilbert was one of the editors of \emph{Mathematische Annalen} where Cauer's paper was published. The footnote $(^{**})$ says that at the same time the proof for two circles was found by Rulf, also following Hilbert's idea. (``\foreignlanguage{german}{Den Beweis hierf\"ur hat gleichzeitig auch Herr Rulf, ebenfalls auf Grund der von Herrn Hilbert gegebenen Anregung, gefunden.}'')

\section{What is a geometric\\ construction?}

Hilbert's argument is simple, nice and convincing (so it is reproduced in many books; see, e.g.,~\cite{kac-ulam,rademacher}; in~\cite{rademacher} Cauer's proof for two circles is also presented). Still a cautious reader would ask: we are proving the non-existence of a contruction of the center of a given circle, but \emph{how do we define the notion of a geometric construction}? What kind of object does not exist?

At first, the answer looks straightforward. As Terence Tao~\cite{tao} puts it,
\begin{quote}
Formally, one can set up the problem as follows. Define a configuration to be a finite collection $\mathcal{C}$ of points, lines, and circles in the Euclidean plane. Define a construction step to be one of the following operations to enlarge the collection $\mathcal{C}$:

\begin{itemize}
\item
(Straightedge) Given two distinct points $A$, $B$ in $\mathcal{C}$, form the line $\overline{AB}$ that connects $A$ and $B$, and add it to $\mathcal{C}$.

\item
(Compass) Given two distinct points $A$, $B$ in $\mathcal{C}$, and given a third point $O$ in $\mathcal{C}$ (which may or may not equal $A$ or $B$), form the circle with centre $O$ and radius equal to the length $|AB|$ of the line segment joining $A$ and $B$, and add it to $\mathcal{C}$.

\item
(Intersection) Given two distinct curves $\gamma$, $\gamma'$ in $\mathcal{C}$ (thus $\gamma$ is either a line or a circle in $\mathcal{C}$, and similarly for $\gamma'$), select a point $P$ that is common to both $\gamma$ and $\gamma'$ (there are at most two such points), and add it to $\mathcal{C}$.
\end{itemize}

We say that a point, line, or circle is constructible by straightedge and compass from a configuration $\mathcal{C}$ if it can be obtained from $\mathcal{C}$  after applying a finite number of construction steps.
\end{quote}
Though perfectly suitable in the case of the trisection problem discussed by Tao (divide angle $A$ in a given triangle $ABC$ into three equal parts), this definition of geometric construction is obviously unsuitable for Hilbert's proof. Applying this definition literally, we see that initially we have only one object (the given circle) and we cannot apply any operation at all, so the statement becomes trivial. Definitely, Hilbert meant something else. 

There are many other regards in which the notion of a geometric construction needs to be clarified.

\begin{itemize}
\item \emph{Specifying the answer}. Is it enough if the object that we want to construct appears in $\mathcal{C}$ (see the definition above) at some step, or we should say explicitly which operation produces this object? If we want the latter, how do we deal with the non-deterministic operation of choosing an intersection point of a line and a circle? If two points $(0,0)$ and $(1,0)$ are given, it is easy to construct (using straightedge and compass) the points $(0,1)$ and $(0,-1)$, but there is no way to distinguish between these two points (for symmetry reasons).

\item \emph{Tests}. If we try to consider a geometric construction as a kind of program (which is quite natural), what kind of \emph{tests} are allowed in this program? Can we check whether some point (constructed so far) belongs to a line (also constructed already)? Can we find in which order three given points are arranged on a line (which of three points is the middle one)? If tests are not allowed, what happens if we try to find an intersection point for two parallel lines?  If tests are allowed, should we also permit \textbf{while}-loops where the number of iterations is not bounded \emph{a priori}?

\item \emph{Arbitrary points}. To solve the problem mentioned above (we do not have enough objects to start the construction), we may permit adding auxiliary points to the configuration $\mathcal{C}$. It is quite common to start a construction by saying something like ``take an arbitrary point not on line $l$''. But if we are allowed to add an arbitrary point to $\mathcal{C}$, all objects become constructible, so this is a bad idea. Instead, we may consider a game with an adversary that chooses the ``arbitrary'' point. But what are the rules of this game? Probably it should be legal to ask the adversary to choose a point that is different from the points that are already in $\mathcal{C}$, or a point that does not belong to the lines that are already in $\mathcal{C}$. What else?

\item \emph{Uniformity}. When solving, say, an angle trisection problem with straightedge and compass, do we want to have one general construction that works for every angle, or do we allow different constructions for different angles?\footnote{A similar alternative appears when we prove that 5th degree equation cannot be solved in radicals: one could prove that a general formula does not exist, or give an example of an equation with integer coefficients whose roots cannot be obtained from rational numbers by arithmetical operations and taking roots. These two settings are different.}
\end{itemize}

Now it is easy to see some problems in the argument we cited. If we require only that the construction gives a set of points that contains the center, then the projection argument does not lead to a contradiction: we know that the center is carried to a non-central point, but some other point in the set could be carried to a central point. On the other hand, if we require that the construction procedure specifies the resulting point and permits tests, we get another problem:  we cannot claim that ``the transformed figure will satisfy all the requirements of the constuction''. Indeed, after the projective transformation the test can give different results (order of points on a line may change, parallel lines may become non-parallel and vice versa). So again the argument does not work. And if we require to specify the answer point but do not permit tests, many classical constructions become illegal. For example, how do we distinguish the center of inscribed circle from the centers of three exscribed circles in a classical construction~\cite{incircle} of the center of the inscribed circle of a triangle as the intersection of the three internal angle bisectors?

These remarks look like pedantic quibbles at first, but there is a real problem here. It turned out that Cauer's result about two circles is plainly false: as shown by Arseny Akopyan and Roman Fedorov~\cite{akopyan-fedorov}, there are some pairs of disjoint non-concentric circles for which the center can be constructed by using only a straightedge. The construction, while rather long, is completely ordinary and does not involve any doubtful steps or tricks (see Section~\ref{history}).

Still, the result about one circle can be saved. Before presenting a corrected proof, we should discuss the definition of a geometric construction. The situation is similar to algorithms theory (as noted by Vladimir~A.~Uspensky on several occasions): one can construct different algorithms (like Euclid's algorithm) without defining the general notion of an algorithm, and people did this for ages. But to prove that some problem is \emph{algorithmically undecidable}, i.e., to prove that an algorithm with some properties does not exist, we need to have a formal definition of algorithm (given only in 1930s by Church, Turing, Kleene, Post and others).

So what is a geometric construction and how can we prove that one cannot construct the center of a given circle by a straightedge alone?

\section{A negative definition and\\ its positive reformulation}

Akopyan and Fedorov~\cite{akopyan-fedorov} suggested the following \emph{negative} definition of constructibility. Assume that we have some collection $\mathcal{C}$ of objects and some object $\alpha$ (say, a point). We say that $\alpha$ \emph{cannot be constructed given} $\mathcal{C}$, if there exists a collection $\mathcal{C}'$ such that (a)~$\mathcal{C}'$ contains $\mathcal{C}$, (b)~$\mathcal{C}'$ is closed under the allowed operations (taking an intersection point of two lines/circles, drawing a line through two points, etc.), (c)~$\mathcal{C}'$ does not contain $\alpha$ and (d)~the set of points in $\mathcal{C}'$ is everywhere dense (every open subset of the plane contains some point from $\mathcal{C}'$). The intuition is clear: since $\mathcal{C}'$ is dense, we may use only points from $\mathcal{C}'$ as ``arbitrary'' points, and will never construct $\alpha$ in this way.

Let us give a ``positive'' equivalent version of this definition. Consider a game where two players alternate. The game starts with the initial set $\mathcal{C}$ of given objects. The second player (Bob) adds new objects to this configuration according to requests of the first player (Alice), thus increasing the current configuration $\overline{\mathcal{C}}$. (Initially $\overline{\mathcal{C}}$ equals $\mathcal{C}$.) These requests could be of several types: (i)~add a line that goes through two  different points in $\overline{\mathcal{C}}$ (specified by Alice); (ii)~add points where two different lines/circles from $\overline{\mathcal{C}}$ (specified by Alice) intersect; (iii)~add an arbitrary point from a non-empty open set $U$ specified by Alice.\footnote{We consider constructions with the straightedge; the initial configuration may contain circles but new circles cannot be added. Allowing the use of a compass, we should add the corresponding operations both in the positive and negative definitions; they remain equivalent.} Only the moves of type~(iii) give Bob some choice; in other cases his move is uniquely determined by Alice's request. Alice wins if the object $\alpha$ (that should be constructed) appears in~$\overline{\mathcal{C}}$. Now we can say that \emph{$\alpha$ is constructible from $\mathcal{C}$} if Alice has a winning strategy in the corresponding game.

Before proving the equivalence of the ``positive'' and ``negative'' definitions, let us compare this positive definition with the questions asked in the previous section. We \emph{do not require to specify the answer} (any element of $\overline{\mathcal{C}}$ could be the answer). \emph{All tests are allowed} (Alice sees the configuration and can choose her move using full information about the configuration). Asking for \emph{arbitrary points}, Alice may request them to be as close to a point of her choice as she wishes (though, of course, she is not allowed to specify the point exactly). It gives her more power than, say, the possibility to request a point in a domain bounded by existing curves (from $\overline{\mathcal{C}}$). Note also that in four steps Alice can force Bob to add a point on a curve from $\overline{\mathcal{C}}$ (by asking for two points on both sides, a connecting line and, finally, the intersection point). Finally, the definition is given in the \emph{non-uniform setting} (each instance of the construction problem is considered in isolation, no common construction scheme is required). In this way we get a rather liberal definition\mhyph and therefore rather strong impossibility result, if we prove that the construction in the sense of this definition does not exist.

Now let us sketch the equivalence proof.\footnote{This proof essentially follows~\cite{akopyan-fedorov}, though the authors do not give explicitly a positive version of the definition speaking about ``algorithms'' instead.}

\begin{theorem}
The positive and negative definitions are equivalent.
\end{theorem}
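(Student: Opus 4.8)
The plan is to prove the two implications separately. Read the negative definition as ``there is a dense, closed $\mathcal{C}'$ with $\mathcal{C}\subseteq\mathcal{C}'$ and $\alpha\notin\mathcal{C}'$'', and the positive one as ``Alice has a winning strategy in the game''. I will show that such a set $\mathcal{C}'$ exists precisely when Alice has \emph{no} winning strategy, i.e.\ when the initial position $\mathcal{C}$ is losing for her; taking negations on both sides then gives the equivalence of the two notions of constructibility.

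\emph{A witness $\mathcal{C}'$ yields a strategy for Bob.} Suppose $\mathcal{C}'$ as above exists. Bob plays so as to keep the invariant $\overline{\mathcal{C}}\subseteq\mathcal{C}'$, which holds initially since $\overline{\mathcal{C}}=\mathcal{C}$. If Alice requests the line through two points of $\overline{\mathcal{C}}$, or the intersection points of two curves of $\overline{\mathcal{C}}$ (or, with a compass allowed, a new circle), the new objects already lie in $\mathcal{C}'$ because $\mathcal{C}'$ is closed under these operations, and Bob has no choice anyway. If Alice requests an arbitrary point of a non-empty open set $U$, Bob picks a point of $U$ lying in $\mathcal{C}'$, possible because the points of $\mathcal{C}'$ are dense. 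Thus $\alpha\notin\overline{\mathcal{C}}$ at every stage, Alice never wins, and hence she has no winning strategy.

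\emph{Absence of a strategy for Alice yields a witness $\mathcal{C}'$.} Call a finite configuration a \emph{winning position} if Alice has a strategy that makes $\alpha$ appear starting from it (in particular if $\alpha$ is already present), and a \emph{losing position} otherwise; ``Alice has no winning strategy'' means exactly that $\mathcal{C}$ is a losing position. The crucial point is that from a losing position $D$ Bob can answer any request while staying in a losing position. For a \emph{deterministic} request (line through two points, intersection points of two curves, compass circle) this is automatic: if the forced successor of $D$ were winning, Alice could win from $D$ by making that move and then following her winning strategy, so $D$ itself would be winning. For a request of an arbitrary point of a non-empty open set $U$: if \emph{every} point $p\in U$ gave a winning position $D\cup\{p\}$, then $D$ would be winning (whatever point Bob returns lies in $U$), a contradiction; hence Bob can choose some $p\in U$ keeping the position losing. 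Now let Alice run a single ``universal'' play against such Bob answers, scheduled by dovetailing so that for every pair of points that ever occurs she eventually requests the line through them, for every pair of curves that ever occurs she eventually requests their intersection points, and for every ball with rational centre and rational radius she eventually requests an arbitrary point of it. Let $\mathcal{C}'$ be the union of all configurations occurring in this play. Then $\mathcal{C}\subseteq\mathcal{C}'$; the set $\mathcal{C}'$ is closed under the allowed operations, since any two of its objects already occur together at some finite stage, after which the dovetailing eventually performs the required operation on them; the points of $\mathcal{C}'$ meet every rational ball and hence every open set, so they are dense; and $\alpha\notin\mathcal{C}'$, because every stage of the play is a losing position whereas $\alpha$ occurs only in winning positions. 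So $\mathcal{C}'$ is the required witness.

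\emph{Main obstacle.} The subtle direction is the second one. It is tempting to let $\mathcal{C}'$ be the union of all configurations reachable while Bob follows a winning strategy against arbitrary play by Alice, but that union need not be closed: two of its objects may appear only in two different plays and never together, so Alice is never in a position to intersect them or join them by a line. The dovetailed ``universal'' play repairs this by forcing all relevant objects into one play; the winning/losing bookkeeping is just the Gale--Stewart open-game determinacy argument carried out by hand, which is all we need here. Beyond that, the remaining points are routine: one checks that requests of an arbitrary point of $U$ are the only moves giving Bob a choice, so that his freedom is confined exactly to open sets in accordance with the density clause, and that a forced move out of a losing position can never produce $\alpha$.
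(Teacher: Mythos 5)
Your proof is correct, and while the easy direction (a witness $\mathcal{C}'$ gives Bob a way to survive forever) coincides with the paper's, your argument for the harder direction is genuinely different. The paper has Alice open by forcing four points in general position (no three collinear, no two connecting lines parallel) and then invokes a geometric lemma \nhyph\ the straightedge closure of four such points is dense in the plane (cf.~\cite[Lemma 1]{baston-bostock}) \nhyph\ so that a single ``bad'' reply of Bob immediately yields a dense closed set avoiding $\alpha$. You instead run the Gale\nhyph Stewart bookkeeping for the whole infinite play: Bob stays inside losing positions (your case analysis of deterministic versus open\hyph set requests is exactly right, and the uncountable combination of strategies over $p\in U$ is unproblematic because the winning condition is open), while a dovetailed ``universal'' schedule of Alice's requests forces the union of all stages to be closed under the operations and to meet every rational ball. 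This buys generality \nhyph\ the argument never uses any geometry and would work verbatim for any collection of deterministic closure operations plus the ``point in an open set'' move, which matters since the paper wants the equivalence to persist when a compass is added \nhyph\ at the price of the dovetailing bookkeeping; the paper's route is shorter but leans on the specific density lemma for straightedge closures. Your remark on why the naive union over all of Bob's winning plays fails to be closed, and how the single universal play repairs this, identifies precisely the point that makes this direction nontrivial.
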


\begin{proof}
Assume that some collection $\mathcal{C}$ and some object $\alpha$ are fixed. One direction is obvious: if there exists a collection $\mathcal{C}'$ with the properties (a)--(d) required by the negative definition, then Alice cannot have a winning strategy in the game specified in the positive definition, since Bob can maintain $\overline{\mathcal{C}}\subset \mathcal{C}'$. This is guaranteed for deterministic steps since $\mathcal{C}'$ is closed under the allowed operations. For the steps where Bob should add a point from a given open set, he uses the density of $\mathcal{C}'$.

The other direction is just a bit more complicated. Assume that a set $\mathcal{C}'$ with properties (a)--(d) does not exist. We need to prove that Alice has a winning strategy.  Let Alice start the game by forcing Bob to add four points such that any three of them are not collinear and all the lines going through pairs of these points are not parallel. For that Alice could choose arbitrary four points with these properties and ask Bob to add points nearby. If for each of Bob's choices Alice has a winning strategy in the rest of the game, then $\alpha$ is constructible in the sense of the positive definition. 

It remains to consider the case where some reply by Bob prevents Alice from winning the game. In this case we will get a contradiction with the assumption. Indeed, let $b_1,b_2,b_3,b_4$ be the points chosen by Bob in this case. No three of them are collinear and no two lines are parallel. Consider the closure of this set under the operations of adding lines (through existing points) and intersection points (of existing lines). It is easy to see that for every four points with above-mentioned properties their closure is everywhere dense in the plane. (One can use a standard trick from projective geometry and move two of four points to infinity. See also~\cite[Lemma 1]{baston-bostock}.) 

We get a bigger (and also dense) set if we add $b_1$, $b_2$, $b_3$, $b_4$ to $\mathcal{C}$ and consider the closure under all allowed operations. Denote this set by $\mathcal{C}'$. It does not contain $\alpha$ since otherwise Alice can win by forcing Bob to perform operations that give $\alpha$. Therefore, all the requirements of the negative definition are fulfilled.
\end{proof}

\section{Hilbert's proof corrected}

Now, having a rigorous definition of a geometric construction, we should repair Hilbert's proof. We still use the same idea: \emph{projective transformations could preserve the circle but move its center}. However, we need more, namely, the existence of an \emph{uncountable family} of projective transformations that preserve the circle and map the center into different points. This looks trivial (having one transformation that maps the center into a non-central point, we can then apply circle rotations), but still this is a crucial observation that differentiates the situations with one circle and two circles (where the construction is sometimes possible). 

We will prove the non-existence of a straightedge-only construction of the center of a given circle using the ``negative'' definition. Let us start with some countable family $\mathcal{G}$ of objects that includes the given circle, contains a dense set of points and is closed under the construction operations (taking a common point of two intersecting lines, etc.). Moreover, we assume that $\mathcal{G}$ is extended to the projective plane, i.e., may include infinite points and infinite line, and closure operations are allowed for infinite objects, too. For example, one may consider all ``algebraic'' objects, i.e., points, lines and circles whose coordinates (coefficients in the standard equations in a coordinate system where the given circle is defined by equation $x^2+y^2=1$) are algebraic, or just select some dense countable set of points and apply closure operations (we still have a countable set after closure).

If $\mathcal{G}$ does not contain the center of the circle, we are done (see the negative definition). If not (for example, if $\mathcal{G}$ is the set of algebraic objects) consider a projective mapping $T$ from our family that maps the center into a point outside $\mathcal{G}$. This is possible since there are uncountably many possible image points and only countably many points in $\mathcal{G}$.\footnote{We can avoid the cardinality argument by using algebraic objects and a transformation in our family that maps the center to a non-algebraic point, as it is done in~\cite{akopyan-fedorov}} Now consider the family $T^{-1}{\mathcal{G}}$ of objects whose $T$-images are in $\mathcal{G}$. (Note that finite points could be mapped by $T$ to infinite ones; this is why we needed $\mathcal{G}$ to include also infinite objects and be closed under operations on them.) By construction $\mathcal{G}'=T^{-1}{\mathcal{G}}$ does not contain the center. On the other hand, points from $\mathcal{G}'$ form a dense set and $\mathcal{G}'$ is closed under construction operations, since both properties are preserved by projective transformations.

\section{Historical remarks}\label{history}

As we have said, the proof that one cannot find the center of a circle using only a straightegde was published by Cauer~\cite{cauer} in \emph{Mathematische Annalen} and was ascribed to Hilbert (who was an editor of this journal at the time). The correction published next year did not question the validity of the proof (it dealt with the case of three circles).

The argument was since then reproduced in many popular books and textbooks (e.g.,~\cite{courant-robbins,rademacher,kac-ulam,prasolov}) without any reservations. At the same time, many authors felt the need to clarify the notion of geometric constructions. For example, Bieberbach recognized that order tests are needed to select one point among the family of points obtained by a construction, otherwise we construct only a set containing the required point, not the point itself:
\begin{quote}
\foreignlanguage{german}{Noch mag aber ausdr\"ucklich hervorgehoben werden, daß es sich in diesem Paragraphen ebenso wie bei den Poncelet\nhyph Steinerschen Konstruktionen stets um ein Konstruieren in der orientierten Ebene handelt. Es soll bei jedem gegebenen und bei jedem konstruierten Punkt feststehen, welches die Vorzeichen seiner Koordinaten sind. Anderenfalls steht nur fest, daß der gesuchte Punkt sich unter den konstruierten befindet}.~\cite[page 26]{bieberbach}\footnote{One should say explicitly that the constructions of this paragraph (as well as the Poncelet\nhyph Steiner constructions) are performed in the oriented plane. For each given and constructed point it should be determined what the signs of its coordinates are. Otherwise it is only certain that the point in question is among the constructed points.}
\end{quote}
However, the usual proof of impossibility of constructing the center of a given circle (ignoring these problems) is presented without any reservations in \S 5 of the same book.

The Cauer's argument was questioned by  C.~Gram. In~\cite{gram} he notes that Cauer's argument for two disjoint circles is equally applicable for the case when a line connecting two centers is given. Still, as Gram proves, it is enough to know one point on this line to reconstruct the centers of the circles. The reason of this paradox is explained as follows:
\begin{quote}
\ldots it is assumed that no distinction is made between proper points and lines and the points and the line at infinity; for the homology $\mathcal{H}$ may carry parallel lines into intersecting lines and conversely. Further, $\mathcal{H}$ need not preserve the (Euclidean) order of points on a line. Consequently,  if it is permitted, under the construction, to distingiush between pairs of parallel lines and pairs of intersecting lines or to decide whether or not a point, given or constructed, is between two other such points, Cauer's argument becomes invalid. It is the purpose of this note to demonstrate this by giving a construction of the centres of two given circles provided that a point of their centre line is known.
\end{quote}
Strangely, Gram did not say anything in this paper about the validity of the arguments for \emph{one} circle.

Gram's paper was published in 1956. In the next year the same observation (the possibility to construct the centers of two disjoint non-concentric circles if a point on the line connecting their centers is given) was made by A.S.~Smogorzhevskii in a popular book for advanced high school students~\cite[\S 19]{smogorzhevsky} (the Russian  edition was published in 1957). It is remarkable that in \S 18 of the same book Cauer's impossibility argument is given (with a projective transformation that preserves the circles and the line connecting their centers, but not the centers); the author made no comments on the apparent contradiction.

Much later (in 1990) the paper of V.J.~Baston and F.A.~Bostock~\cite{baston-bostock} appeared. The authors say in the abstract:
\begin{quote}
The paper points out that there are several interpretations in the literature of what is meant by a geometric construction. It is shown that these differences in interpretation are important, since certain classic results (including the Mohr\nhyph Mascheroni Theorem) are true under one but false under another.

The main aim of the paper is to reveal, in the literature, certain evidence of contradictions---contradictions originating, we feel, in the notion of what is to be understood by a construction.
\end{quote}
In the paper a difference is made between \emph{derivable} points (points that can be obtained by taking the closure of the set of given objects) and \emph{constructible} points (points that ``can be constructed''; this notion is understood intuitively, withouth any attempt to give a definition): ``The enigmatic character of the constructible points will be evident when we compare them with the mathematically precise derivable points''. The authors give examples that show that the intuitive notion of a constructible point may differ from the formal notion of a derivable point in both directions: on the rational plane $\mathbb{Q}^2$ \emph{every} point is derivable from any four points  assuming that they do not form a parallelogram and no three points are collinear (Lemma 1, p.~1019). Still intuitively the midpoint of two points is not usually considered as constructible (even if two other generic points are given; this usually is proven by saying that midpoints are not preserved by projective transformations). If we construct the midpoint of $AB$ in the rational plane by adding two auxiliary points $C$, $D$ and applying Lemma~1 mentioned above, the number of steps in the construction depends on the choice of $C$ and $D$, and we have no indication which of the points that appear during the construction is the right one.

The uniformity question is also mentioned: authors note that in the Mohr\nhyph Mascheroni construction of the intersection of lines $AB$ and $CD$ given $A$, $B$, $C$, $D$ using only a compass, the number of steps may depend on the initial configuration and be large if the intersection point is far away. In our terminology, a uniform construction algorithm may need a loop.

As to a more formal definition of a geometric construction, one should mention the paper of Yury Manin~\cite{manin} in the collection of articles written for advanced high school students. Manin introduces the closure operations (operations 1--3 on p.~208) and notes the necessity of ``arbitrary'' auxiliary points chosen in the regions bounded by existing objects (p.~209). However, the game setting is not mentioned; it is said instead that ``a straightedge and compass construction is a sequence that consists of finitely many steps of the described type''.
(\emph{\foreignlanguage{russian}{Построением с помощью циркуля и линейки называется последовательность, состоящая из конечного числа описанных шагов.}})
This formulation, it seems, implicitly assumes that the number of steps does not depend on the choice of ``arbitrary'' points (this could create problems with the Mohr\nhyph Mascheroni theorem). Manin speaks about straightedge and compass constructions (where a dense set of points can be constructed as soon as two points are given, so choosing arbitrary points is almost unnecessary) and does not say anything about straightedge\hyph only constructions.

Erwin Engeler applies the general framework of computations in first order structure developed in~\cite{engeler-general} to elementary geometry~\cite{engeler-geometry}. He recognizes the need for tests and explicitly lists allowed types of tests. However, he does not consider the auxiliary points problem; instead, he assumes that points $(0,0)$, $(0,1)$, $(1,0)$ are always given for free. He does not consider straightedge\hyph only constructions.

Akopyan and Fedorov~\cite{akopyan-fedorov} suggest the ``negative'' definition of constructibility and show how to correct Hilbert's argument using this negative definition (and the family of projective transformations). They also state that the negative definition is equivalent to an existence of a ``construction algorithm'' but do not even attempt to define the latter notion. They not only show that Cauer's proof has problems (this was mentioned already in~\cite{gram}), but also answer a question posed in~\cite{gram} and provide an example of two disjoint non-concentric circles such that the centers \emph{can} be constructed. (They noted that Poncelet configuration for some pairs of circles provides a symmetric configuration, and then the center line can be constructed and Gram's argument can be applied to find the centers.) They also provide a proof (based on their negative definition) that for some other pair of circles (also given explicitly) the centers \emph{cannot} be constructed. The plan could be described as follows: if it were always possible, Poncelet\nhyph Steiner construction could be used to construct also the common tangents, and therefore this can be done for every two conic sections, but sometimes this problem gives an irreducible cubic equation that cannot be solved with a straightedge and compass.

It seems that Cauer may already have a similar argument in mind when he wrote:
\begin{quote}
\ldots \foreignlanguage{german}{Ich benutze diese Gelegenheit, um auf die algebraische Behandlung dieser Probleme hinzuweisen. W\"are es m\"oglich, bei zwei Kreisen mit imagin\"aren Schnittpunkten im Endlichen die Mittelpunkte mit dem Lineal allein zu finden, so k\"onnte man auch bei zwei Kegelschnitten, von denen mindestens der eine ganz ausgezogen gedacht wird, und die vier imagin\"are Schnittpunkte haben, deren reelle Verbindungslinien mit dem Lineal allein finden; dies Problem f\"uhrt aber bekanntlich auf eine irreduzible Gleichung dritten Grades, deren Wurzeln nach der Theorie der algebraischen Gleichungen nicht durch Aufl\"osung von quadratischen und linearen Gleichungen gefunden werden k\"onnen. Das, worauf es mir damals ankam, war: zu zeigen, da\ss\ man mit Benutzung des Hilbertschen Gedankenganges ohne diese Theorie ganz elementar zum Ziel kommt. Diese liefert aber, wie Herr Schur bemerkte, das weitere Resultat, da\ss\ auch bei zwei Kreisen mit reellen Schnittpunkten, wenn nur der \emph{eine} als bekannt angesehen wird, die Konstruktion der Mittelpunkte mit dem Lineal allein unm\"oglich ist; denn auch diesem Problem entspricht eine irreduzible Gleichung dritten Grades}.~\cite[Berichtigung, Bd.~74]{cauer}\footnote{I use this opportunity to mention also the algebraic treatment of this problem. If it were possible to find the centers of two circles with finite imaginary intersection points using only a straightedge, then one could also, using a straightedge alone, find the real connecting lines of two conic sections of which at least one is thought of as fully extended and who have four imaginary intersection points. However, this problem is known to lead to an irreducible equation of third degree whose roots cannot be obtained by solving linear and quadratic equations, as the theory of algebraic equations says. My main concern at that time was to show that one can obtain the result without using this theory, by an elementary argument outlined by Hilbert. However, the algebraic reasoning gives (as noted by Mr.~Schur) an additional result: for two circles with real intersection points one cannot construct the centers using only a straightedge, if only \emph{one} circle is assumed to be known, since this problem also corresponds to an irreducible equation of third degree.}
\end{quote}
It is not clear, however, whether we may interpret `Verbindungslinien' as common tangents (used in the argument suggested in~\cite{akopyan-fedorov}) and what is the exact meaning of `ganz ausgezogen' or `als bekannt angesehen'. May be the Schur's remark should be understood as follows: if one of the intersecting circles is erased in the small neighborhoods of the intersection points, we cannot construct their centers anymore (using only a straightedge).

Finally, let us note that in Hilbert's famous book on the foundations of geometry~\cite[\S 36]{hilbert-grundlagen} there are some remarks that can be understood as the definition of constructibility in logical terms: a point with some properties is constructible with restricted means if its existence can be proven using a specified set of axioms. However, Hilbert does not give exact definitions and statements, and it is not clear whether this approach can be applied to, say, compass-only constructions.

\section*{Acknowledgements}
I am grateful to Arseny Akopyan and Roman Fedorov for sharing their paper~\cite{akopyan-fedorov} and its preliminary versions, and for discussing their results; to Sergey Markelov who participated in these discussions and had a lot of interesting suggestions; to Serge Lvovski for critical remarks; to Rupert H\"olzl who corrected the translation of German quotations and made many useful remarks. The author was supported by RaCAF ANR-15-CE40-0016-01  and RBFR 16-01-00362
grants while working on this paper.

\end{document}